\renewcommand{\[}{\begin{equation}\begin{aligned}}
\renewcommand{\]}{\end{aligned} \end{equation}}
\newcommand{\abs}[1]{\lvert#1\rvert}
\newcommand{\nrm}[1]{\lvert\lvert#1\rvert\rvert}
\newcommand{\R}{\mathbb{R}}
\newcommand{\D}{\mathbb{D}}
\newcommand{\iprod}{\mathbin{\raisebox{\depth}{\scalebox{1}[-1]{$\lnot$}}}}
\newcommand{\h}{\mathfrak{h}}
\newcommand{\N}{\breve{N}}
\newcommand{\F}{\mathcal{F}}
\newcommand{\gr}{\text{grad}}
\newcommand{\dbd}{\bar{\partial}\partial}
\newtheorem{thm}{Theorem}
\newtheorem{prop}[thm]{Proposition}
\newtheorem{lemma}[thm]{Lemma}
\newtheorem{cor}[thm]{Corollary}
\theoremstyle{remark}
\theoremstyle{definition}
\newtheorem{definition}[thm]{Definition}
\numberwithin{equation}{section}
\numberwithin{thm}{section}
\author{Ethan Lane Addison}
\address{Mathematics Department, Stony Brook University, Stony Brook, NY 11794}
\email{addison@math.stonybrook.edu}
\title{A Product Model for Generalizing Poincaré-Type Kähler Metrics}
\date{}
\begin{document}

\begin{abstract}
We begin by defining a type of Kähler metric near the zero section of a trivial holomorphic open disk bundle $N$ over a compact Kähler manifold $X$ by incorporating flows generated by holomorphic vector fields on $X$. These metrics are then shown to deviate exponentially from Poincaré-type metrics on $N\setminus X$ in terms of the log-polar distance from $X$ in $N$. Lastly we see that they arise naturally when perturbing classes containing Poincaré-type Kähler metrics of constant scalar curvature to obtain nearby cscK metrics even when the perturbed class on $X$ does not admit a cscK metric.
\end{abstract}
\maketitle

\section{Introduction}
A persistent theme in complex geometry is the search for preferred or canonical choices of metrics on complex manifolds. In the case that the manifold is Kähler, one can asked more pointed questions such as whether or not a canonical metric can be found in a given Kähler class. By seeking minimizers of the $L^2$-norm of the curvature tensor in such a class, E. Calabi introduced in \cite{Cal} the notion of an extremal metric, a generalization of Kähler metrics of constant scalar curvature, respectively abbreviated to extK and cscK metrics. Examples of the longevity of the search for canonical metrics stretch back to the classical Uniformization Theorem, up through Yau's Theorem \cite{Yau} which resolved a conjecture of Calabi, continuing to the modern day with the replete understanding of the Kähler-Einstein setting elucidated by Chen-Donaldson-Sun \cite{CDS}.

A fundamental property of compact Kähler manifolds shown by LeBrun-Simanca in \cite{LBS} is the openness in the Kähler cone of the set of Kähler classes which contain extremal metrics. In fact, if the manifold has discrete automorphism group, then this openness is upheld even in the case of cscK metrics. When one relaxes compact Kähler to complete Kähler, properties like this LeBrun-Simanca openness are less clear, an issue compounded by restricting attention to only specific types of metrics, such as those defined by their asymptotic behavior near the ends. In cases wherein the ends are composed of the components of a complex hypersurface of some compactified manifold --- as arise in studying quasiprojective varieties or are conjectured to emerge \cite{Dn} in seeking a minimizing sequence for the Calabi functional when a minimizer does not exist --- one can define complete metrics with cusp-like singularities along the hypersurface. 

Let $(X,\sigma)$ be a compact cscK manifold with complex dimension $n$.  After removing the zero section, the open unit disk bundle $N= \D \times X$ admits such a complete cscK metric $\omega$ by taking the product of $\sigma$ and the standard Poincaré metric on the punctured disk. Following the definition given by Auvray \cite{Auv}, other metrics which are quasi-isometric to $\omega$ and have bounded derivatives at all orders with respect to $\omega$ are considered metrics of Poincaré-type, abbreviated here as PT. Despite initially seeming like a loose categorization, Auvray also illustrates by considering only the local behavior near $X$ (which we hereafter identify with the zero section of $N$), regardless of the nature of the normal bundle or tubular neighborhood, that the asymptotic properties of such metrics are tightly controlled when said metric is extremal; we note here that recent investigations of these metrics tend toward the extK and cscK setting since the special case of Kähler-Einstein PT metrics has been well documented by Yau \cite{Yauu}, Kobayashi \cite{Koby}, and Wu \cite{Wu}. 

The asymptotics imply that if $\tilde{\sigma}$ is a smooth real $(1,1)$-form on all of $N$ such that $\tilde{\sigma}+d\beta$ is PT, then this PT metric is extK (respectively cscK) only if it induces an extK (respectively cscK) metric on $X$ in the class $[\tilde{\sigma}|_X]$. Though PT metrics enjoy many of the same properties of Kähler metrics on compact manifolds, this above restriction belies a sensitivity to the geometry of the hypersurface $X$ that prevents perturbations of cscK PT metrics from being cscK. From LeBrun-Simanca openness, if the class $[\sigma]$ on $X$ is perturbed to another nearby class, then the resulting class will at least admit an extremal metric. However, if this resulting metric is not cscK, which could be the case if $X$ admits nontrivial holomorphic vector fields, then the perturbation of $[\sigma]$ cannot be extended to a cscK PT metric on the punctured disk bundle in a way that yields another cscK PT metric, as Sektnan in \cite{Sek} shows the extremal vector field of an extK PT metric must be holomorphically extendable across $X$. One can infer then that PT metrics, while rich in examples and desirable properties, may be insufficient on their own to establish a consummate theory of complete extK metrics, as suggested by the non-PT complete extremal metrics constructed by Apostolov-Auvray-Sektnan \cite{AAS} in the toric setting. Incidentally, as the hypersurfaces in this case are toric themselves, they have dense open subsets which are trivially embedded into the ambient space, looking almost everywhere like the setting we explore presently.

By incorporating the vector fields directly into a Kähler distortion potential on $\breve{N} = N\setminus X$, one can compensate for extra elements in the kernel of the Lichnerowicz operator for the PT metric and define a new complete metric which is not PT but still well-behaved and which can moreover be chosen to be cscK when one is starting from cscK initial data. The construction of such metrics is the basis of this article; given an appropriate vector field $V$ on $X$, the associated potential which we refer to as the \emph{gnarl} associated to $V$ by $\sigma$ will be notated $\Psi_V$. The result of adding a Kähler metric to the complex Hessian of $\Psi_V$ will be referred to as a \emph{gnarled} metric. Using the real harmonic $(1,1)$-forms on $X$ as the parameter of perturbation, our main result can be stated in these terms:

\begin{thm}\label{thm:T1}
  For $(X,\sigma)$ a compact cscK manifold, there is a neighborhood $U$ about $0$ in $\mathcal{H}^{1,1}_{\sigma}(X,\R)$ such that for every $\eta \in U$ there exists a real holomorphic vector field $V_\eta$ on $X$ and a cscK gnarled Poincaré-type metric on $\breve{N}$ associated to $V_{\eta}$ by a metric in $[\sigma + \eta]$ which pulls back to this cohomology class under constant sections of $\breve{N}$.
\end{thm}

This result is in a similar vein to the compact case explored by LeBrun-Simanca, and while one can consider it a local result, the gnarl can be included into any manifold for which $X$ is a divisor with holomorphically trivial normal bundle with the use of a suitable cutoff. The following two sections serve to explicate the construction of the gnarled metric in this simple product case and then to provide a proof of the main result by the introduction of a modified scalar curvature operator which we show can be inverted to solve the necessary equation guiding constant curvature for our gnarled metric.

\section{Construction of the Metric}\label{sec:construction}

In order to construct the gnarl, we take $z$ as the standard coordinate on $\D$ and define a ``logarithmic-polar"  coordinate system on $\D^* = \D \setminus \{0\}$ by using the standard angular coordinate $\theta$ along with a radial function $\tau = \ln(-\ln\abs{z})$ which goes to $\infty$ near $X$. The cusp metric we use for $\D^*$ has $\tau$ as a Kähler potential and will be written $dd^c\tau$, where we define the operator $d^c = JdJ^{-1}$. Here the complex structure $J$ is a real endomorphism acting on covariant tensors by precomposition, thus on $k$-forms, $J^{-1} = (-1)^kJ$. Under the decomposition $d = \partial + \bar{\partial}$, we have that $d^c = i(\partial - \bar{\partial})$, and therefore the complex Hessian operator can be written as $dd^c = 2i\dbd$. From the definition of the new coordinates, we can see $d^c\tau = e^{-\tau}d\theta$ as well as the ``cusp" nature of the volume form $e^{-\tau}d\theta\wedge d\tau$ upon approach to the zero section $X$.

Since any obstruction to LeBrun-Simanca openness for PT metrics in this context will arise from holomorphic vector fields on $X$, we now note that by Matsushima \cite{Mats} and Lichnerowicz \cite{Lich} the following fact that $X$ admitting a cscK metric implies that the space of holomorphic vector fields on $X$ is a complex Lie algebra which splits into
$$H^0(X,T_X) = \mathfrak{a}\oplus\mathfrak{h}\oplus J\mathfrak{h}.$$
Here $\mathfrak{a}$ represent fields parallel with respect to the Levi-Civita connection while $J\mathfrak{h}$ is a subspace of real vector fields which are Hamiltonian with respect to the symplectic form $\sigma$. From the relationship between the symplectic form and the metric tensor on $X$, each $V \in \mathfrak{h}$ is therefore a gradient vector field, i.e. $V = \gr f_V$ for a real function $f_V$, which has an associated time-$t$ flow notated by $F_{tV}$. These are the only vector fields we need to consider since the kernel of the Lichnerowicz operator contains only holomorphy potentials like $f_V$, and the nontrivial parallel fields cannot be gradients because they do not have zeroes on $X$.

Starting from such a $V$, we can define a function $\psi_{tV} \in \mathcal{C}^{\infty}(\R\times X)$ as the solution to the differential equation:
\begin{equation}\label{gnarl1}
\frac{d}{dt}\psi_{tV} = F^*_{tV}f_V, \text{ and } \psi_{0} = 0.
\end{equation}
Since the flow is a path in Aut$_0(X)$, $F^*_{tV}\sigma$ remains in the same de Rham and Dolbeault class of $\sigma$, and so by the $dd^c$-lemma, we see that the $\psi_{tV}$ satisfies 
\begin{equation}\label{ddcl}
dd^c\psi_{tV} = F^*_{tV}\sigma - \sigma.
\end{equation}

This function serves as the starting point of the gnarl. Recalling that we consider $N$ as a bundle over $X$, we set $\nu:N \twoheadrightarrow X$ as the projection map. Then $\tau \times \nu$ is a map from $\N$ to $\R\times X$, so $\psi_{tV}$ can be pulled back to the punctured disk bundle.

\begin{definition}\label{defn:gnarl}
   Given $V \in \mathfrak{h}$, the gnarl associated to $V$ is the function given by $\Psi_V = (\tau \times \nu)^*\psi_{tV}$. For a given Kähler metric $\varpi$ on $\N$, we set $\varpi_V = \varpi + dd^c\Psi_V$.
\end{definition}

Recalling that the product metric on $\N$ is given by $\omega = dd^c\tau + \nu^*\sigma$, where we now suppress the pullback by $\nu$ unless necessary to include it, in line with Definition 2.1 we set $\omega_V = \omega + dd^c\Psi_V$. This is a well-defined real $(1,1)$-form on $\N$, but the first task will be to show that it can actually be considered a metric, i.e. that it is a positive form. This may in fact not be true for very large choices of vector field $V$, but the following argument will make clear that so long as $V$ is small enough with respect to the starting data, then $\omega_V$ is a genuine Kähler metric. As we are interested eventually in a perturbation problem, only a small neighborhood of $0$ in $\mathfrak{h}$ will be of great concern, so this will be sufficient for the proof of Theorem 1.1. What is more, that $\sigma$ is cscK is not relevant to this positivity.

\begin{prop}\label{prop:pos}
Let $(\tau\times\upsilon)^*\psi_{tV}^\varphi$ be the gnarl associated to $V$ by a metric $\sigma+dd^c\varphi$ on $X$ for some $\varphi$. Then there exists a neighborhood $\mathcal{W} \subset \mathfrak{h}$ about $0$ such that for any $V \in \mathcal{W}$, $\omega + dd^c(
\upsilon^*\varphi + (\tau\times\upsilon)^*\psi_{tV}^\varphi)$ is a symplectic form on $\N$.
\end{prop}

\begin{proof}
Without loss of generality, we show this explicitly only for $\omega_V$. Working on the punctured disk bundle is less convenient than that of the corresponding upper-half-plane bundle, so we introduce the covering map
 \begin{equation}\label{upsi}
     \upsilon:\mathbb{H} \times  X \twoheadrightarrow \N \text{ via } (\zeta,x) \mapsto (e^{i\zeta},x).
 \end{equation}
 
 This map pulls $\tau$ up to a new variable $w = \ln \mathfrak{Im}\zeta$, and then the entire gnarled form $\omega_V$ for some unspecified $V$ can be written out as
 \begin{equation*}
     \upsilon^*\omega_V = \sigma + dd^c\left(w+\psi_{wV}\right).
 \end{equation*}

What is more, since both bundles are trivial, the flow $F_{tV}$ can be extended to $\N$ trivially and can be lifted up to $\mathbb{H} \times X$ as a $t$-dependent family of diffeomorphisms $\tilde{F}_{tV}$ according to
$$\tilde{F}_{tV}(\zeta,x) = (e^{-t}\zeta, F_{tV}(x)).$$

Note that for any given $t$, $\tilde{F}_{tV}$ is actually a biholomorphism. With the goal of showing these form a family of isometries for $\omega_V$, we must illustrate a handy property of the gnarl itself. Restricting our attention back to $X$ again for a moment, observe that for a choice of specific time $T$,
$$F^*_{TV}dd^c\psi_{tV} = F^*_{(T+t)V}\sigma - F^*_{TV}\sigma = dd^c\left(\psi_{(T+t)V}-\psi_{TV}\right).$$
Therefore there is a function $\kappa_T(t)$ that allows us to equate
$$F^*_{TV}\psi_{tV} = \psi_{(T+t)V}-\psi_{TV} + \kappa_T(t).$$
We would like to show this additional term is in fact 0. Initially we see that for $t = 0$, $\kappa_T(0) = 0$. Additionally, taking the first derivative yields
$$\kappa'_T(t) = \frac{d}{dt}\psi_{(T+t)V}-F^*_{TV}\frac{d}{dt}\psi_{tV} = F^*_{(T+t)V}f_V - F^*_{TV}F^*_{tV}f_V = 0.$$
Hence $\kappa_T(t) = 0$ for all $T$ and $t$, and we have a clear understanding of how the flow along $V$ affects the associated gnarl, i.e. according to
\begin{equation}\label{removeH}
    F^*_{TV}\psi_{tV} = \psi_{(T+t)V}-\psi_{TV}.
\end{equation}

Returning to the cover, this formula arises when flowing the metric $\upsilon^*\omega_V$:
\begin{equation*}
    \tilde{F}^*_{TV}(\upsilon^*\omega_V) = F^*_{TV}\sigma + dd^cw + dd^c\left(\psi_{(w+T-T)V}\right)-dd^c\left(\psi_{TV}\right).
\end{equation*}
Note that since $\psi_{TV}$ is independent of the fiber variable, on the cover we explicitly have $$\upsilon^*dd^c\psi_{TV} = \tilde{F}^*_{TV}(\upsilon^*\sigma) - \upsilon^*\sigma$$ Therefore we arrive at some cancellations from equation \ref{ddcl}:
$$\tilde{F}^*_{TV}\upsilon^*(\omega + dd^c\psi_{\tau V}) = \sigma + dd^c(w+\psi_{wV}) = \upsilon^*(\omega + dd^c\psi_{\tau V}).$$

On any given strip, say $\mathcal{Q} = \{1 \leq \mathfrak{Im}\zeta \leq 2\} \times X$, we can certainly choose $V$ to be small enough so that the above form is positive since the gnarl is independent of the angular coordinate $\theta$ and thus the whole form only varies on a compact set. However, for any $p \in \mathbb{H}\times X$, there exists a $T$ such that $\tilde{F}_{TV}(p) \in \mathcal{Q}.$ Given that we now have chosen $V$ such that $\upsilon^*(\omega + dd^c\psi_{\tau V})$ is positive in $\mathcal{Q}$ and the form has been shown to be preserved by the map $\tilde{F}_{TV}$, it follows $\upsilon^*(\omega + dd^c\psi_{\tau V}) > 0$ on $\mathbb{H} \times X$. Since it is a local diffeomorphism, $\upsilon$ cannot pull a nonpositive form back to a positive form, and we see that $\omega_V$ defines a Kähler metric on $\N$. As this procedure can be done for any choice of direction $V/\nrm{V}_{\mathcal{C}^0(X,\sigma)}$, we arrive at an open neighborhood $\mathcal{W}$ of 0, as desired.
\end{proof}

A remark we make here that will be vital to the proof that follows arises from noticing that we can study the scalar curvature of $\upsilon^*\omega_V$  by looking at a single slice, say $\{2i\}\times X$. Here we do not mean the scalar curvature of the restricted metric on the slice, but rather the restriction of the scalar curvature as a function; the biholomorphisms $\tilde{F}_{tV}$ can move this function to every other slice above this one as $t$ varies, and in the case that this function happens to be a constant, the value of $t$ is irrelevant. Certainly this remains true on $\N$ since $\upsilon$ is a local diffeomorphism. Recall that Proposition \ref{prop:pos} holds for any starting $\sigma + dd^c\varphi$ on $X$, and therefore the procedure to prove Theorem \ref{thm:T1} will be to search for a function $\varphi$ on $X$ by looking at the restriction of the scalar curvature of $\omega + dd^c\nu^*\varphi$ to $\{e^{-2}\}\times X \hookrightarrow \N$, essentially converting the problem to one in which we must only concern ourselves with functions on $X$. 

More than simply showing $\omega_V$ is a metric, the covering space argument can be used to get $\mathcal{C}^k$ bounds on $\omega_V$ which are derived from estimating the size of the gnarl with respect to the PT metric $\omega$. Going forward, we use the asymptotic notation $\lesssim$ to express eventual subjugation of the left-hand side by a constant multiple of the right-hand side with respect to either $t,\tau,$ or $T$ heading towards infinity. Which of these variables is suggested should be clear from context and its presence in an exponential expression.

\begin{prop}\label{prop:P23}
For every $k \geq 0$, there exists an $\varepsilon_k > 0$ such that
$$\nrm{\Psi_V}_{\mathcal{C}^k(\omega)} \lesssim e^{\varepsilon_k \tau}.$$
\end{prop}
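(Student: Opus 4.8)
The plan is to exploit the covering-space picture from Proposition \ref{prop:pos}, where the gnarl pulls back to $\psi_{wV}$ with $w = \ln\mathfrak{Im}\zeta$, and to reduce the $\mathcal{C}^k(\omega)$ estimate to two ingredients: (i) a fixed-time bound for $\psi_{tV}$ and its $X$-derivatives on a compact slice, uniform for $|t|$ bounded, together with (ii) control of how differentiating in the radial direction $\tau$ (equivalently $w$) and rescaling the metric $\omega$ interact. First I would recall that on the strip $\mathcal{Q} = \{1 \le \mathfrak{Im}\zeta \le 2\}\times X$ the function $\psi_{wV}$ and all of its derivatives are bounded purely in terms of $V$ and $\sigma$, since $\psi_{tV}$ solves the ODE \eqref{gnarl1} driven by the smooth family $F^*_{tV}f_V$ on the compact manifold $X$ and depends smoothly on $(t,x)$; thus on a compact $t$-interval one gets $\nrm{\psi_{tV}}_{\mathcal{C}^k(X,\sigma)} \le C_k$ with $C_k$ depending only on $k$, $V$, and the starting data.

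Next I would leverage the equivariance \eqref{removeH}, which on the cover says $\tilde{F}^*_{TV}\psi_{tV} = \psi_{(T+t)V} - \psi_{TV}$, to transport the estimate off $\mathcal{Q}$: for a point $p$ with radial value $\tau(p)$, there is a $T$ with $|T| \sim \tau(p)$ moving $p$ into $\mathcal{Q}$, and since the $\tilde{F}_{TV}$ are biholomorphisms preserving $\omega_V$, the $\mathcal{C}^k(\omega)$-size of $\Psi_V$ near $p$ is comparable to the $\mathcal{C}^k(\omega)$-size of $\psi_{wV} - \psi_{TV}$ near a point of $\mathcal{Q}$. The constant term $\psi_{TV}$ is killed by $dd^c$ in the metric but contributes at the level of the function itself; here is where the exponential enters. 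On $\mathcal{Q}$ one has a fixed bound on $\psi_{wV}$, but $\psi_{TV}$ with $|T| \sim \tau(p)$ grows at most linearly in $T$ — indeed $\psi_{TV} = \int_0^T F^*_{sV}f_V\,ds$, so $\abs{\psi_{TV}} \le |T|\,\nrm{f_V}_{\mathcal{C}^0} \lesssim \tau(p)$. A linear bound is certainly $\lesssim e^{\varepsilon_0\tau}$ for any $\varepsilon_0 > 0$, which gives the $k=0$ case with room to spare.

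For the higher derivatives I would argue that taking $\omega$-covariant derivatives only improves matters in the radial direction while costing a bounded amount in the $X$-directions. Concretely, $\partial_w$ is a unit-length vector field (up to constants) for $dd^c w$, so each radial derivative of $\psi_{wV}$ replaces $\psi$ by $\partial_w\psi_{wV} = F^*_{wV}f_V$, which by ingredient (i) is bounded on $\mathcal{Q}$ with no growth at all; and further radial derivatives bring down factors from differentiating the flow, again bounded on the compact slice, while the mixed and pure-$X$ derivatives are controlled by (i) after transporting via $\tilde{F}_{TV}$. The upshot is that for $k \ge 1$ the quantity $\nrm{\Psi_V}_{\mathcal{C}^k(\omega)}$ is actually uniformly bounded, so the stated estimate holds a fortiori with any $\varepsilon_k > 0$; the only genuine growth is the linear term in the $k=0$ norm coming from the additive constant in \eqref{removeH}. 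I would then assemble these observations into the bound $\nrm{\Psi_V}_{\mathcal{C}^k(\omega)} \lesssim e^{\varepsilon_k\tau}$.

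The main obstacle I anticipate is bookkeeping the equivalence of the local $\mathcal{C}^k(\omega)$-geometry across slices: one must check carefully that the $\omega$-orthonormal frames (the natural ones adapted to $dd^c\tau + \sigma$, with the radial and angular directions scaling like $e^{\tau/2}$ in length) are comparable under the push-forward by $\tilde{F}_{TV}$, so that a $\mathcal{C}^k$ bound on the compact strip really does yield a $\mathcal{C}^k(\omega)$ bound at the far-away point with only the advertised exponential loss. Since $\tilde{F}_{TV}$ acts on the $\mathbb{H}$-factor by the rescaling $\zeta \mapsto e^{-T}\zeta$, which is precisely an isometry of $dd^c w$, this comparison is clean — but it is the step that requires the most care to state precisely, and it is also where the choice of which norm one transports (the metric $\omega$ versus $\omega_V$) must be reconciled using that $\omega_V$ and $\omega$ are uniformly equivalent on $\mathcal{Q}$ for $V \in \mathcal{W}$.
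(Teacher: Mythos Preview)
Your $k=0$ argument matches the paper's. The gap is in your treatment of $k\geq 1$, where you assert that the $\mathcal{C}^k(\omega)$-norm is in fact uniformly bounded because the transport maps $\tilde{F}_{TV}$ carry a bound on the strip $\mathcal{Q}$ out to any point $p$. This step fails: $\tilde{F}_{TV}$ is an isometry of $\omega_V$, not of $\omega$. On the $\mathbb{H}$-factor it is indeed the hyperbolic isometry $\zeta\mapsto e^{-T}\zeta$, but on the $X$-factor it acts by $F_{TV}$, and $F_{TV}^*\sigma$ is \emph{not} equal to $\sigma$; the ratio $\nrm{F_{TV}^*\sigma}_\sigma$ can grow with $T$. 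So when you ``transport via $\tilde{F}_{TV}$'' the $\omega$-frame at $p$ does not land on an $\omega$-frame at $q\in\mathcal{Q}$, and the pure-$X$ and mixed derivatives pick up exactly the distortion of $\sigma$ under the flow. Concretely, $d_X\psi_{wV}$ satisfies $\partial_w(d_X\psi_{wV})=F_{wV}^*(d_Xf_V)$, and $\nrm{F_{wV}^*(d_Xf_V)}_\sigma$ need not stay bounded as $w\to\infty$ (think of a non-periodic gradient flow on $\mathbb{P}^1$). Your proposed patch --- work with $\omega_V$-norms on $\mathcal{Q}$ and then convert --- is circular: comparing $\omega$ and $\omega_V$ at the far point $p$ with derivatives is precisely Corollary~2.4, which is deduced from the proposition you are proving.

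The paper avoids this by using different charts $\upsilon_T:(\zeta,x)\mapsto(e^{ie^T\zeta},x)$ that act \emph{trivially} on $X$ and hence genuinely satisfy $\upsilon_T^*\omega=\upsilon^*\omega$. The price is that the pulled-back gnarl becomes $\psi_{TV}+F_{TV}^*(\psi_{(\ln\mathfrak{Im}\zeta)V})$, so the flow $F_{TV}^*$ reappears in the function rather than in the metric. Its effect on $\mathcal{C}^k$-norms is then controlled by Lemma~\ref{flowlem}, which shows $\nrm{F_{tW}^*\beta}_{\mathcal{C}^k(\sigma)}\lesssim e^{c_kt}\nrm{\beta}_{\mathcal{C}^k(\sigma)}$ via an ODI for the connection difference $S_t=\nabla-F_{tW}^*\nabla$. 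That lemma is the missing ingredient in your outline and is the true source of the exponential rates $\varepsilon_k$ for $k\geq 1$; they are not removable.
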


\begin{cor}
For every $k \geq 0$, there exists an $\tilde{\varepsilon}_k > 0$ such that
$$\nrm{\omega_V}_{\mathcal{C}^k(\omega)} \lesssim e^{\tilde{\varepsilon}_k \tau}.$$
\end{cor}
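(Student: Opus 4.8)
The plan is to derive the corollary from Proposition \ref{prop:P23} by a routine application of the product rule in the bundle geometry, exploiting that $\omega_V = \omega + dd^c\Psi_V$ differs from the fixed Poincaré-type metric $\omega$ only by the complex Hessian of the gnarl. First I would recall that $\omega$ is itself of Poincaré type, hence by definition has bounded derivatives of all orders with respect to itself: $\nrm{\omega}_{\mathcal{C}^k(\omega)} \lesssim 1$ for each $k$. Thus it suffices to bound $\nrm{dd^c\Psi_V}_{\mathcal{C}^k(\omega)}$, and since $dd^c$ is a fixed second-order differential operator with coefficients that are themselves controlled in the $\mathcal{C}^k(\omega)$ scale (the Poincaré-type structure ensures the Christoffel symbols and their $\omega$-covariant derivatives are bounded), one has the schematic inequality $\nrm{dd^c\Psi_V}_{\mathcal{C}^k(\omega)} \lesssim \nrm{\Psi_V}_{\mathcal{C}^{k+2}(\omega)}$. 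Applying Proposition \ref{prop:P23} with the index $k+2$ produces a bound $\lesssim e^{\varepsilon_{k+2}\tau}$, and setting $\tilde{\varepsilon}_k = \max(\varepsilon_{k+2}, \delta)$ for any small $\delta>0$ absorbs the $\mathcal{O}(1)$ contribution from $\omega$ (since $e^{\delta\tau}$ is eventually larger than any constant as $\tau \to \infty$, and on any compact region away from $X$ all quantities are trivially bounded).

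The one point requiring a word of care is that $dd^c$ does not literally commute with passing to $\mathcal{C}^k(\omega)$ norms in a coefficient-free way: expanding $dd^c\Psi_V$ in $\omega$-covariant derivatives introduces curvature terms of $\omega$. However, because $\omega$ is Poincaré type these curvature terms and all their $\omega$-derivatives are uniformly bounded, so they only contribute multiplicative constants and lower-order derivatives of $\Psi_V$, all of which are dominated by $e^{\varepsilon_{k+2}\tau}$ after enlarging the exponent slightly if needed. Equivalently, one can run the covering-space argument of Proposition \ref{prop:pos} once more: pull everything back by $\upsilon$, where $\upsilon^*\omega = \sigma + dd^c w$ has a genuinely product structure, note that $\tilde F_{TV}^*(\upsilon^*\omega_V) = \upsilon^*\omega_V$, and therefore it is enough to estimate $\nrm{\upsilon^*\omega_V}_{\mathcal{C}^k}$ on the fixed strip $\mathcal{Q}$, where the $w$-dependence of $\psi_{wV}$ is the only source of growth and is controlled precisely by Proposition \ref{prop:P23}.

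I do not expect any serious obstacle here: the corollary is essentially bookkeeping, and all the analytic content is already contained in Proposition \ref{prop:P23}. The only thing to be slightly careful about is stating the implicit constants uniformly in $V$ over the neighborhood $\mathcal{W}$ from Proposition \ref{prop:pos}, which follows because the estimates of Proposition \ref{prop:P23} are themselves uniform in $V \in \mathcal{W}$ (the construction of $\varepsilon_k$ there depended only on bounds for $f_V$ and the flow $F_{tV}$, which vary continuously and hence boundedly over the compact closure of a small enough $\mathcal{W}$). So the final form of the argument is: $\nrm{\omega_V}_{\mathcal{C}^k(\omega)} \le \nrm{\omega}_{\mathcal{C}^k(\omega)} + \nrm{dd^c\Psi_V}_{\mathcal{C}^k(\omega)} \lesssim 1 + \nrm{\Psi_V}_{\mathcal{C}^{k+2}(\omega)} \lesssim e^{\tilde\varepsilon_k \tau}$, with $\tilde\varepsilon_k$ depending only on $k$.
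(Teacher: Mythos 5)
Your proposal is correct and takes essentially the same route as the paper: write $\omega_V = \omega + dd^c\Psi_V$, apply the triangle inequality, reduce to $\nrm{\nabla^{k+2}\Psi_V}_\omega$, and invoke Proposition~\ref{prop:P23} to set $\tilde\varepsilon_k = \varepsilon_{k+2}$. The only cosmetic difference is that the paper disposes of the $\omega$ term by observing $\nabla^k\omega = 0$ (since $\nabla$ is the Levi-Civita connection of $\omega$, which is therefore covariantly constant), rather than appealing to the Poincaré-type boundedness of $\omega$ with respect to itself and enlarging the exponent to absorb the $O(1)$ contribution.
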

\begin{proof}
By using the above proposition,
$$\nrm{\nabla^k\omega_V}_\omega \lesssim \nrm{\nabla^k\omega}_\omega + \nrm{\nabla^{k+2}\Psi_V}_\omega \lesssim e^{\varepsilon_{k+2}\tau},$$
since the first term is zero. Hence we merely set $\tilde{\varepsilon}_k = \varepsilon_{k+2}$.
\end{proof}
The proposition itself requires a similar but more general and useful result about flows of vector fields on compact manifolds.

\begin{lemma}\label{flowlem}
 Let $\beta$ be a smooth tensor on $(X,\sigma)$ and let $W$ be a smooth vector field with associated time-$t$ flow $F_{tW}$. Then for each $k \geq 0$, there exists $c_k > 0$ depending on $W$ and the valence of $\beta$ such that 
 $$\nrm{F_{tW}^*\beta}_{\mathcal{C}^k(X,\sigma)} \lesssim e^{c_kt}\nrm{\beta}_{\mathcal{C}^k(X,\sigma)}.$$
\end{lemma}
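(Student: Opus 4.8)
The plan is to move the estimate off of the tensors and onto the flow maps themselves. Differentiating $t\mapsto F_{tW}^{*}\beta$ gives $\frac{d}{dt}F_{tW}^{*}\beta=F_{tW}^{*}(\mathcal{L}_{W}\beta)=\mathcal{L}_{W}(F_{tW}^{*}\beta)$, and since $\mathcal{L}_{W}$ is a first-order operator the right-hand side is only controlled by $\nrm{F_{tW}^{*}\beta}_{\mathcal{C}^{k+1}(X,\sigma)}$, so a direct Grönwall argument on $t\mapsto\nrm{F_{tW}^{*}\beta}_{\mathcal{C}^{k}(X,\sigma)}$ loses a derivative and does not close. The remedy is to exploit the one-parameter group law $F_{(s+h)W}=F_{hW}\circ F_{sW}$: it suffices to prove that pullback by a near-identity flow map distorts the $\mathcal{C}^{k}$ norm by a factor $1+O(h)$, and then to compose roughly $t/h$ of these.

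First I would fix once and for all a finite atlas on the compact manifold $X$ and use it to define the $\mathcal{C}^{k}$ norms, which is equivalent up to constants to the intrinsic definition via the Levi-Civita connection of $\sigma$. Since $X$ is compact the flow $F_{tW}$ is complete, and by smooth dependence of flows on time --- equivalently, by inducting on the variational equations coming from $\partial_{h}F_{hW}=W\circ F_{hW}$, each a linear ODE whose coefficients and forcing terms involve only finitely many derivatives of $W$ --- there are $h_{0}>0$ and $A_{k}>0$, depending only on $\nrm{W}_{\mathcal{C}^{k}(X,\sigma)}$ and $k$, so that $F_{\pm hW}$ is $A_{k}h$-close to the identity in $\mathcal{C}^{k}$, measured chartwise on the local representatives, for $0\le h\le h_{0}$; in particular $dF_{hW}$ and $(dF_{hW})^{-1}$ lie within $O(h)$ of the identity transformation.

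The multivariable chain rule together with the tensorial transformation law then expresses the order-$\le k$ derivatives of the components of $F_{hW}^{*}\gamma$ as a finite sum of contractions of components of $\gamma$, composed with $F_{hW}$, against products of derivatives of $F_{hW}$ and of $(dF_{hW})^{-1}$ of total order $\le k$. Every factor supplied by $F_{hW}$ agrees with its value at $h=0$, namely the identity transformation, whose higher derivatives vanish, up to $O(h)$, while $\nrm{\gamma\circ F_{hW}}_{\mathcal{C}^{0}(X,\sigma)}=\nrm{\gamma}_{\mathcal{C}^{0}(X,\sigma)}$ because $F_{hW}$ is a diffeomorphism of $X$; thus exactly one term of the expansion reproduces $\gamma$ itself and all the others are $O(h)$. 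Collecting terms gives, for every tensor $\gamma$ of the given valence,
$$\nrm{F_{hW}^{*}\gamma}_{\mathcal{C}^{k}(X,\sigma)}\le(1+c_{k}h)\,\nrm{\gamma}_{\mathcal{C}^{k}(X,\sigma)},\qquad 0\le h\le h_{0},$$
with $c_{k}$ depending only on $W$, $k$, and the valence. Iterating this with $h=t/m$ for a positive integer $m$ large enough that $h\le h_{0}$, and using $F_{tW}^{*}\beta=(F_{hW}^{*})^{m}\beta$, yields $\nrm{F_{tW}^{*}\beta}_{\mathcal{C}^{k}(X,\sigma)}\le(1+c_{k}h)^{m}\nrm{\beta}_{\mathcal{C}^{k}(X,\sigma)}\le e^{c_{k}t}\nrm{\beta}_{\mathcal{C}^{k}(X,\sigma)}$ for $t\ge 0$, which is the assertion; for $t\le 0$ one repeats the argument with $-W$.

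The analytic content sits entirely in the near-identity inequality above, and I expect the only genuine difficulty to be organizational: carrying out the higher chain-rule bookkeeping so that the leading term $\gamma\circ F_{hW}$ supplies the $1$ while every remaining term is truly $O(h)$, and tracking which norm of $W$ feeds into each $c_{k}$. There is no hard analysis once the problem has been transferred to the maps $F_{hW}$, since their derivatives solve linear ODEs with coefficients controlled by finitely many derivatives of $W$ on the compact $X$. As an alternative route one could instead Grönwall the $H^{s}(X,\sigma)$ norm, where the derivative loss in $\frac{d}{dt}\nrm{F_{tW}^{*}\beta}_{H^{s}}^{2}=2(F_{tW}^{*}\beta,\mathcal{L}_{W}F_{tW}^{*}\beta)_{H^{s}}$ is cured by integrating the top-order transport term by parts against the divergence of $W$, and then invoke Sobolev embedding; this only delivers the weaker estimate with $\nrm{\beta}_{\mathcal{C}^{k'}(X,\sigma)}$ for some $k'>k$ on the right, which would nevertheless suffice for Proposition \ref{prop:P23} since the function $f_{V}$ appearing there is smooth and fixed.
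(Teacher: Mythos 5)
Your proof is correct, and it takes a genuinely different route from the paper's. The paper works intrinsically: it inducts on $k$, bases the $k=0$ step on the identity $\nrm{F^*_{tW}\beta}_\sigma=\nrm{\beta}_{F^*_{-tW}\sigma}$ and a Gr\"onwall bound on $\nrm{F^*_{tW}\sigma}_\sigma$, and for $k\ge1$ introduces the connection-difference tensor $S_t=\nabla-F^*_{tW}\nabla$, bounds $S_t$ and its covariant derivatives by further Gr\"onwall arguments using the explicit formula for the variation of the Levi-Civita connection, and then commutes $\nabla$ past $F^*_{tW}$ at the cost of $S_t$-terms expanded by Leibniz. Thus the paper also avoids the naive derivative-losing Gr\"onwall, but by an algebraic device: the ODI is always run on a quantity ($F^*_{tW}\sigma$, or $S_t$) whose $t$-derivative is expressible back in terms of itself, rather than on $F^*_{tW}\beta$ directly. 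Your substitute for this device is the semigroup law together with a chartwise near-identity estimate $\nrm{F^*_{hW}\gamma}_{\mathcal{C}^k}\le(1+c_kh)\nrm{\gamma}_{\mathcal{C}^k}$, iterated $t/h$ times. The trade-off is fairly clean: the paper's argument stays coordinate-free but must grind through Christoffel-symbol variations and manage the valence-dependent tensor-product structure of $S_t$; yours fixes a finite atlas (needing the standard shrunken-cover bookkeeping so that $F_{hW}$ maps each small chart into a larger one, which you acknowledge but do not spell out) and then the analytic content reduces to smooth dependence of the flow on $h$, with the identity map's vanishing higher derivatives doing the bookkeeping. Your diagnosis that a direct Gr\"onwall on $\nrm{F^*_{tW}\beta}_{\mathcal{C}^k}$ loses a derivative is exactly right and is the best single-sentence explanation of why neither argument can be as naive as one might first hope; the two proofs are essentially two different repairs of that defect. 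Your Sobolev-norm alternative in the closing sentence would indeed give a weaker statement (norm of a higher derivative on the right), but as you observe it would still suffice for the application in Proposition \ref{prop:P23}.
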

\begin{proof}
We induct on $k$. Starting with $k = 0$, we note that $$\nrm{F^*_{tW}\beta}_{\sigma} = \nrm{\beta}_{F^*_{-tW}\sigma} \leq \nrm{F^*_{-tW}\sigma}_{\sigma}\nrm{\beta}_{\sigma} \leq e^{c_0t}\nrm{\beta}_{\sigma}.$$
Here we find $c_0$ through the following ordinary differential inequality:
\begin{equation}\label{nrmflow}
    \partial_t\nrm{F^*_{tW}\sigma}_{\sigma} \leq \nrm{F^*_{tW}\pounds_{W}\sigma}_{\sigma} = \nrm{F^*_{tW}d(W\iprod\sigma)}_{\sigma} \leq c_0\nrm{F^*_{tW}\sigma}_{\sigma},
\end{equation}
since $d(W\iprod\sigma)$ is just a differential form on $X$, so there exists a constant $c_0$ depending on $W$ such that 
$$\nrm{c_0\sigma}_{L^\infty} \geq \nrm{d(W\iprod\sigma)}_{L^\infty}.$$

For the case $k = 1$, we define a 1-form $S_t$ with values in the endomorphism bundle of whatever bundle contains $\beta$ according to $S_t = \nabla - \nabla_t$ where $\nabla_t = F^*_{tW}(\nabla)$ is the Levi-Civita connection of $F^*_{tW}\sigma.$ This will allow us to pass the flow by the covariant derivative with a cost as seen by
\begin{equation}
    \nabla(F^*_{tW}\beta) = \nabla_t(F^*_{tW}\beta) + S_tF^*_{tW}\beta = F^*_{tW}(\nabla\beta) + S_tF^*_{tW}\beta.
\end{equation}
Taking the norm of the first covariant derivative then yields
\begin{equation*}
    \nrm{\nabla(F^*_{tW}\beta)} \leq \nrm{F^*_{tW}(\nabla\beta)} + \nrm{S_tF^*_{tW}\beta} \leq e^{\tilde{c}_0t}\nrm{\nabla \beta} + e^{c_0t}\nrm{S_t}~\nrm{\beta}.
\end{equation*}
Here we had to adjust the rate to a new $\tilde{c}_0$ (found the same way as in the $k=0$ case) in the first term because the valence increased by 1 with the derivative. Now the issue has been reduced to understanding the derivatives of $S_t$.

Set $g$ and $g_t$ to be the Riemannian metrics associated to $\sigma$ and $F^*_{tW}\sigma$. In general depending on the valence of $\beta$, $S_t$ might be several tensor products of the difference between the connections acting on vector fields. Yet, this simplest difference will control the product, so we assume for a moment that $S_t$ is a section of $\Omega^1(TX\otimes T^*X)$ instead of higher tensor powers. Then we find the following ordinary differential inequality of $g$-norms:
\begin{equation*}
    \partial_t\nrm{S_t} \leq \nrm{\partial_tS_t} \leq \nrm{\dot{\nabla}_t}
\end{equation*}
\begin{equation*}
    =\frac{1}2\nrm{g_t^{j\ell}(\nabla_{t,r}\pounds_{W}g_{t,u\ell}+\nabla_{t,u}\pounds_Wg_{t,r\ell}-\nabla_{t,\ell}\pounds_Wg_{t,ur})F^*_{tW}(\partial_j\otimes dx^u \otimes dx^r)},
\end{equation*}
a consequence of differentiating the Christoffel symbols of $g_t$ with respect to $t$. Since the flow commutes with the Lie derivative, we can simplify this expression to
\begin{equation*}
    \partial_t\nrm{S_t} \leq \frac{1}2\left|\left|F^*_{tW}\left(g^{j\ell}(\nabla_r\pounds_{W}g_{u\ell}+\nabla_u\pounds_Wg_{r\ell}-\nabla_{\ell}\pounds_Wg_{ur})(\partial_j\otimes dx^u \otimes dx^r)\right)\right|\right|.
\end{equation*}
Then applying the Lie derivative identity $\pounds_Wg_{u\ell} = \nabla_u(W^{\flat}_{\ell})+\nabla_{\ell}(W^{\flat}_u)$ we infer that 
\begin{equation*}
    \nabla_r\pounds_{W}g_{u\ell}+\nabla_u\pounds_Wg_{r\ell}-\nabla_{\ell}\pounds_Wg_{ur} = 2\nabla^2W^{\flat} + 2\text{Rm}\cdot W^{\flat}.
\end{equation*}
The flow can then be extracted from the norm at the expense of introducing another exponential term with rate $\tilde{c}_0$ depending on $W$ in a proportional manner (arising from measuring the flowed metric tensor for the tangent bundle with respect to the unflowed metric as in equation \ref{nrmflow}), thus we find 
\begin{equation*}
    \partial_t\nrm{S_t} \lesssim e^{\tilde{c}_0 t}\left(\nrm{\nabla^2W^{\flat}} + \nrm{\text{Rm}}~\nrm{W}\right) \lesssim e^{\tilde{c}_0 t}\nrm{W}_{\mathcal{C}^2(X,\sigma)}.
\end{equation*}
Since $\tilde{c}_0$ depends on $W$, the ratio $\nrm{W}_{\mathcal{C}^2(X,\sigma)}/\tilde{c}_0$ is independent of any scaling of the vector field $W$, so shrinking $W$ will only cause a decrease in the rate and not an increase in the coefficient of the exponential term. By integrating the ODI, we arrive at
$\nrm{S_t} \lesssim e^{\tilde{c}_0 t}$ for the case that $S_t$ operates on vector fields. Yet, this was assuming $S_t$ acted on vector fields. In general, we may get a multiple $m\tilde{c}_0$ in the exponent to account for the valence of $\beta$, but this poses no threat to the argument. Note the above procedure unfolds similarly for $\nabla^kS_t$, which then depends on $\nrm{W}_{\mathcal{C}^{k+2}(X,\sigma)}.$ 
Taking the largest of the exponents present gives the appropriate $c_1$ for the case at hand.

Now assume we have subexponential bounds with rate $c_k$ for $\nrm{\nabla^kF^*_{tW}\beta}$, so we try to bound the norm of the next derivative:
\begin{equation*}
    \nrm{\nabla^{k+1}F^*_{tW}\beta} \leq \nrm{\nabla^k\nabla_tF^*_{tW}\beta} + \nrm{\nabla^kS_tF^*_{tW}\beta}.
\end{equation*}
Since we have $\nabla_tF^*_{tW}\beta = F^*_{tW}(\nabla\beta)$, we can note $\nabla\beta$ satisfies the hypothesis of having $k$-th derivative subexponential bounds for some rate $\breve{c}_{k}$. Expanding out the other term with the Leibniz Rule gives us
\begin{equation*}
    \nrm{\nabla^{k+1}F^*_{tW}\beta} \lesssim e^{\breve{c}_kt}\nrm{\nabla^{k+1}\beta} + \sum_{j=0}^k \binom{k}{j}\nrm{\nabla^{k-j}S_t}\nrm{\nabla^jF^*_{tW}\beta}.
\end{equation*}
All of these terms have a subexponential growth rate as we have seen; merely take the largest among the resulting addends to define $c_{k+1}$ to complete the induction.
\end{proof}

\begin{proof}[Proof of Proposition \ref{prop:P23}]
We assume we are working about a level set of $\tau$. For the zeroth order estimate, we note that
\begin{equation*}
    |\Psi_V| \leq \left|\int_0^{\tau}F_t^*f_V ~dt \right| \leq \tau\nrm{f_V}_{L^\infty},
\end{equation*}
which is certainly subexponential in $\tau$. For the other values of $k$, we proceed as in the lemma, showing the first derivative explicitly for clarity. First, we define a map from an bounded open subset of $\mathbb{H}\times X$ to $\N$ as follows.
$$\mathcal{A} = \{(\zeta,x)| -1 < \mathfrak{Re}\zeta < 1, 1 < \mathfrak{Im}\zeta < 3\},$$
$$\upsilon_T:\mathcal{A} \rightarrow \N \text{ via } (\zeta,x) \mapsto (e^{ie^T\zeta},x).$$

Note that for a given $T$, the above map is certainly not surjective, but for every $q \in \N$, there is always a value of $T$ for which $q$ is in the image of $\upsilon_T$. Let us now consider $q$ in the level set of interest, setting $T = \lfloor \tau \rfloor$ and taking $\hat{q} \in \upsilon^{-1}_T(q)$. The utility of this map lies in the fact that $\upsilon_T^*\omega = \upsilon^*\omega$ for the covering map defined earlier in equation \ref{upsi}. In other words, this map still pulls $\omega$ back to the standard Poincaré metric on $\mathcal{A}$, so we have the equality
\begin{equation*}
    \nrm{\Psi_V}_{\mathcal{C}^k(\omega)}(q) = \nrm{\upsilon^*_T\Psi_V}_{\mathcal{C}^k(\upsilon^*\omega)}(\hat{q}).
\end{equation*}
However, from one of the fundamental properties of the gnarl shown in \ref{gnarl1}, 
$$\upsilon^*_T\Psi_V = \psi_{(T+\ln\mathfrak{Im}\zeta)V} = \psi_{TV} + F^*_{TV}(\psi_{(\ln\mathfrak{Im}\zeta)V}).$$
Here, the flow is just the trivial extension from $X$ to $\mathcal{A}$. From this relation, we see that for the first derivative, which is just the exterior derivative, we get
\begin{equation*}
    \nrm{d\Psi_V}_\omega(q) = \nrm{d\upsilon^*_T\Psi_V}_{\upsilon^*\omega}(\hat{q}) \leq \nrm{d\psi_{TV}}_{\upsilon^*\omega}(\hat{q}) + \nrm{dF^*_{TV}(\psi_{(\ln\mathfrak{Im}\zeta)V})}_{\upsilon^*\omega}(\hat{q}).
\end{equation*}
For the first term, notice
$$\partial_T\nrm{d\psi_{TV}} \leq \nrm{d(\partial_T\psi_{TV})} = \nrm{dF^*_{TV}f_V} \lesssim e^{c_1T}\nrm{f_V}.$$

The rate $c_1$ arises as in Lemma \ref{flowlem}. Now we can integrate the ODI and find $\nrm{d\psi_{TV}}$ is subexponential. For the term with the flow, observe that we can pull the flow out of the norm at the expense of an exponential factor as in Lemma \ref{flowlem} which is then multiplied by the quantity
$$\nrm{d\psi_{(\ln\mathfrak{Im}\zeta)V})} \leq \sup_{\mathcal{A}} \nrm{d\psi_{(\ln\mathfrak{Im}\zeta)V})},$$
which is independent of $T$, hence why we pulled back to a set whose closure is compact. Together, these give us subexponential bounds for $k=1$.

For higher derivatives, setting $\tilde{\nabla}$ as the connection for $\upsilon^*\omega$, we still have an inequality of the form
\begin{equation*}
    \nrm{\nabla^k\Psi_V}(q) \leq \nrm{\tilde{\nabla}^k\psi_{TV}}(\hat{q})+\nrm{\tilde{\nabla}^kF^*_{TV}(\psi_{(\ln\mathfrak{Im}\zeta)V})}(\hat{q}).
\end{equation*}
The only additional tool needed here is the endomorphism-valued 1-form $\tilde{S}_T = \tilde{\nabla} - F^*_{TV}(\tilde{\nabla})$ which allows commutation of $F^*_{TV}$ and $\tilde{\nabla}$. What is left in the end is a polynomial expression in the $\tilde{\nabla}$ derivatives of $\psi_{TV}, \psi_{(\ln\mathfrak{Im}\zeta)V},$ and $\tilde{S}_T$, the last of which was shown to be subexponential in $T \approx \tau$ in Lemma \ref{flowlem}. Taking the largest exponent among the resulting monomials gives the appropriate $\varepsilon_k$ to complete the proof of the statement.

\end{proof}

Lastly in this section, we make the remark that the metric $\omega_V$ is complete on $\N$. Again appealing to the covering space, we show that $\upsilon^*\omega_V$ is complete by illustrating that the distance to the boundary from any point is infinite. Starting with a point $p = (\zeta_0,x) \in \mathbb{H}\times X$, we can WLOG consider $\zeta_0$ to be purely imaginary of sub-unit length and assume to the contrary of completeness that there exists a geodesic $\gamma:[0,a] \rightarrow \overline{\mathbb{H}\times X}$ which begins at $p$ and terminates at the boundary. Then for $T = -\ln \frac{\zeta_0}i$, which is a positive real number, we have the inequalities
$$\text{length}(\gamma) \geq \text{dist}(p,b(\mathbb{H}\times X)) \geq \sum_{j <  0}\text{dist}(\{e^{jT}i\}\times\R \times X,\{e^{(j-1)T}i\}\times \R \times X),$$
since indeed the geodesic must pass through each slice parameterized by the values of $\mathfrak{Im}\zeta$ on the way to the boundary.

The quantity $\text{dist}(\{e^{jT}i\}\times\R \times X,\{e^{(j-1)T}i\}\times \R \times X)$ is essentially the distance between the boundary components of 
$$\{(\zeta,x) | e^{(j-1)T}< \mathfrak{Im}\zeta < e^{jT}\}.$$
But as we have shown above, the map $\tilde{F}_{(j+1)TV}$ is an isometry for $\omega_V$ that maps this set to $\{(\zeta,x) | e^{-2T}< \mathfrak{Im}\zeta < e^{-T}\}$, which incidentally contains $p$. This implies the distance between the slices is independent of $T$, and since it is positive, each term in the earlier inequalities is infinite, contradicting the assertion that $\gamma$ was of finite length. Hence $\upsilon^*\omega_V$ and likewise $\omega_V$ are complete.

 \section{Proof of Main Result}\label{sec:mainproof}
In order to make clear the hereditary property of extK and cscK PT metrics proven by Auvray in \cite{Auv}, we want to explicitly state the corresponding definition for a class of PT metrics in the global scenario wherein $X$ is a hypersurface with trivial normal bundle in a compact Kähler manifold $(M,\varpi_0)$. The definitions regarding PT metrics are easily extended to $M\setminus X$ by using a bump function $\chi$ around $X$ with the function $\chi\tau$ replacing $\tau$ in all of the statements so that $\varpi_0 +dd^c(\chi\tau)$ becomes the baseline PT metric against which all others are measured.

\begin{definition}\label{def:PT}
  For a smooth Kähler metric $\varpi_0$ on $M$, a PT metric $\varpi$ is said to be in the class $[\varpi_0]$ if there exists a function $y = O(\chi\tau)$ on $M\setminus X$ such that $\varpi = \varpi_0 + dd^cy$ and $\nrm{y}_{\mathcal{C}^k(M\setminus X,\varpi_0 + dd^c{(\chi\tau)})}$ is bounded for all $k > 0$.
\end{definition}

A natural desire for such a family of metrics would be that if the class $[\varpi_0]$ contains an extremal PT metric, then for any closed $(1,1)$-form $\eta$ small enough, $[\varpi_0 + \eta]$ would also contain an extremal PT metric just as in the case for compact Kähler manifolds. However, an extK but not cscK PT metric would need to have an associated nontrivial holomorphic vector field whose extension to $M$ fixes $X$, which does not always exist, even if $X$ admits holomorphic vector fields of its own. In such a case, one would have only a cscK PT metric, yet the perturbed class will restrict to the Kähler class $[(\varpi_0 + \eta)|_{X}]$ on $X$ which could admit extK metrics but no cscK metrics. The following theorem of Auvray, adapted for our present context, implies that in the above situation, no extK PT metrics can exist in $[\varpi_0 + \eta]$.

\begin{thm}\label{thm:heredity}(\cite{Auv}, Theorem 4)
  Let $\varpi_0$ be a Kähler metric on $M$ and assume there exists an extK (resp. cscK) PT metric of class $[\varpi_0]$ on $M\setminus X$. Then there exists an extK (resp. cscK) metric in the class $[\varpi_0|_{X}]$ on $X$.
\end{thm}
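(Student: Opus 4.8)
The plan is to adapt Auvray's argument from \cite{Auv} to the present compactified setting: from the PT metric $\varpi$ one extracts a \emph{boundary metric} $\sigma_\infty$ on $X$, and one shows that $\mathrm{Scal}(\varpi)$ evaluated on points approaching $X$ tends to $\mathrm{Scal}_X(\sigma_\infty)$ up to an explicit additive constant coming from the cusp direction, so that constancy (resp.\ the extremal condition) for $\mathrm{Scal}(\varpi)$ forces the same for $\sigma_\infty$. First I would localize near each connected component of $X$: using triviality of its normal bundle, introduce the log-polar coordinate $\tau$ and the model cusp metric $\omega = dd^c\tau + \sigma$, against which $\varpi = \varpi_0 + dd^c y$ is quasi-isometric with all $\omega$-covariant derivatives of $y$ bounded and $y = O(\chi\tau)$. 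Working in the product model (the general case being treated by Auvray's asymptotic analysis along the same lines), for small $z_0$ the complex submanifold $\{z_0\}\times X$ inherits a Kähler metric $\varpi_{z_0} := \varpi|_{\{z_0\}\times X}$; although $\nrm{y|_{\{z_0\}\times X}}_{L^\infty}$ may grow like $\tau$, its $X$-Hessian is one block of $\varpi-\varpi_0$ and hence is uniformly $\mathcal{C}^\infty$-bounded, so $\{\varpi_{z_0}\}$ is precompact in $\mathcal{C}^\infty(X)$. Each $\varpi_{z_0}$ differs from $\varpi_0|_X$ by the $dd^c$ of a smooth function on the compact $X$, so it lies in $[\varpi_0|_X]$; hence any subsequential limit $\sigma_\infty$ as $z_0 \to 0$ is a closed $(1,1)$-form in $[\varpi_0|_X]$, positive since the quasi-isometry constants survive the limit, i.e.\ a Kähler metric on $X$ in the prescribed class.

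Next I would relate the curvatures. Writing $\varpi$ in block form with respect to the disk/$X$ splitting --- a disk block, an $X$-block, and a mixed block --- the PT hypotheses should force that near $X$ the metric splits, up to terms negligible for the computation of scalar curvature, as a Riemannian product $g_{\mathrm{cusp}}\times\sigma_\infty$, where $g_{\mathrm{cusp}}$ is the complete finite-volume hyperbolic cusp metric associated to $dd^c\tau$; in particular the mixed block must decay and the disk block asymptote to the standard cusp. Since scalar curvature is a continuous function of the $2$-jet of the metric and $\mathrm{Scal}(g_1\times g_2)=\mathrm{Scal}(g_1)+\mathrm{Scal}(g_2)$ with $\mathrm{Scal}(g_{\mathrm{cusp}})=c_0$ a universal constant, one obtains
\[
 \mathrm{Scal}(\varpi)(z_0,x)\ \longrightarrow\ c_0+\mathrm{Scal}_X(\sigma_\infty)(x)\qquad\text{as } z_0\to 0 .
\]
In the cscK case $\mathrm{Scal}(\varpi)\equiv s$, whence $\mathrm{Scal}_X(\sigma_\infty)\equiv s-c_0$ and $\sigma_\infty$ is cscK. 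In the extK case $\mathrm{Scal}(\varpi)=\theta_Z$ is the holomorphy potential of the extremal field $Z$, which --- by the extendability of extremal vector fields of PT metrics across $X$ (Sektnan \cite{Sek}) --- restricts to some $Z_X\in H^0(X,T_X)$; combining the displayed limit with the identity $\mathrm{grad}_\varpi\mathrm{Scal}(\varpi)=Z$ then yields $\mathrm{grad}_{\sigma_\infty}\mathrm{Scal}_X(\sigma_\infty)=Z_X$, which is holomorphic, so $\sigma_\infty$ is extremal.

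The hard part will be the second step: establishing that the suitably rescaled PT metric converges, in $\mathcal{C}^2$, to an exact Riemannian product near $X$ --- in particular that the mixed block decays and the disk block converges to the standard cusp --- since this is precisely where one must use the full strength of both PT conditions ($y=O(\chi\tau)$ together with boundedness of all $\omega$-covariant derivatives of $y$); it is the technical heart of Auvray's Theorem~4. By comparison, the extraction of $\sigma_\infty$ and the curvature bookkeeping in the second step are routine, relying only on the product curvature identity and the continuity of de Rham cohomology classes under $\mathcal{C}^0$-limits of closed forms.
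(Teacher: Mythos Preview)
The paper does not give its own proof of this theorem; it is quoted from Auvray \cite{Auv}, and the only argument offered is the single sentence that an extremal PT metric admits the asymptotic expansion
\[
a\,dd^c\tau + \nu^*\sigma + O\!\left(|\ln|z||^{-r}\right)
\]
near $X$ for some $a,r>0$, with $\sigma$ the induced extK (resp.\ cscK) metric on $X$. Your outline is the same strategy in different words---extract a boundary metric on $X$ and read off its scalar curvature from an asymptotic product structure---and you correctly flag the splitting as the hard step.

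There is one genuine misattribution, though. You assert that the splitting ``is precisely where one must use the full strength of both PT conditions.'' It is not: quasi-isometry together with bounded $\omega$-derivatives of $y$ gives you precompactness of the slice restrictions $\varpi_{z_0}$, but not convergence, and it places no constraint forcing the mixed block to decay or the disk block to settle on \emph{the} standard cusp rather than some other quasi-isometric fiber metric. A generic PT metric can oscillate forever in $\tau$ without any limiting product structure. What produces the expansion above in Auvray's argument is the \emph{extremal equation itself}---a fourth-order elliptic PDE whose analysis in the PT weighted spaces yields both the decay rate $r>0$ and the uniqueness of the limit $\sigma$. Your soft precompactness route would at best hand you subsequential limits with no control on the fiber and mixed directions, which is not enough to pass the scalar-curvature identity $\mathrm{Scal}(\varpi)\to c_0+\mathrm{Scal}_X(\sigma_\infty)$ to the limit; the cscK/extK PDE has to be fed into the asymptotic analysis, not merely applied after the fact.
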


This result is proven by showing that locally any extremal PT metric splits near the divisor as $$a~dd^c\tau + \nu^*\sigma + O(|\ln(|z|)|^{-r})$$ for some $a, r > 0$, where $\sigma$ is the resulting extK or cscK metric on $X$. However, the method of proving this is in spirit the reverse of introducing the gnarl. Indeed, Auvray uses the $\tau$-dependent Kähler distortion potential of a cscK PT metric near the divisor to give rise to a family of nearly cscK metrics on $X$ moreover showing that this family is generated by flows of time-$\tau$-dependent holomorphic vector fields.  In any event, the obstruction to our desired LeBrun-Simanca openness is local, and thus we introduce the gnarl as a local construction in order to overcome this issue.

Our approach is to construct an appropriate scalar curvature operator to and from functions on $X$ in accord with the remark at the end of Section 2. Then through linearizing and applying a Banach space implicit function theorem argument, we will arrive at a choice of gnarled PT metric in any nearby Kähler class which is cscK near the divisor and hence has different asymptotic properties than those delineated by Auvray for extK PT metrics. To this end, we need the following lemma.

\begin{lemma}\label{lemma:L1}
The linearization of the map $V \mapsto \text{Scal}(\omega_V)$ at $\omega_V$ is given by $$W \mapsto 2\mathcal{D}_V^{\star_V}\mathcal{D}_V(\tau \underline{f_W}_{\tau}) + 2\langle d\text{Scal}(\omega_V),d(\tau \underline{f_W}_{\tau})\rangle_V,$$ where $\mathcal{D}_V$ is the operator $f \mapsto \bar{\partial}(\gr_V^{1,0}f)$, $\star_V$ and $\langle,\rangle_V$ represent the duality and inner product provided by $\omega_V$, and $\underline{f_W}_{\tau}=\frac{1}{\tau}\int_{0}^{\tau}F_{\varsigma V}^*(f_W+W\psi_{(\tau-\varsigma)V})d\varsigma.$
\end{lemma}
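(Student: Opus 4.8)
The plan is to compute the linearization in two stages. First I would identify how a variation of $V$ in the direction $W \in \mathfrak{h}$ translates into a variation of the potential $\Psi_V$, and hence of the metric $\omega_V$; second I would feed that variation into the standard formula for the linearization of the scalar curvature operator on a Kähler manifold.

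For the first stage, recall $\Psi_V = (\tau\times\nu)^*\psi_{tV}$ where $\psi_{tV}$ solves $\tfrac{d}{dt}\psi_{tV} = F^*_{tV}f_V$ with $\psi_0 = 0$. I would differentiate the defining ODE with respect to $V$ in the direction $W$. Writing $\dot\psi$ for $\tfrac{\partial}{\partial s}\big|_{s=0}\psi_{t(V+sW)}$, one gets $\tfrac{d}{dt}\dot\psi = F^*_{tV}\bigl(f_W + \pounds_{\,t W}(F_{tV}^{-1})^* \cdots\bigr)$; the key simplification is that at the level of $dd^c$, only the leading term survives: differentiating $dd^c\psi_{tV} = F^*_{tV}\sigma - \sigma$ gives $dd^c\dot\psi = t\,F^*_{tV}(dd^c f_W) = dd^c(t\,F^*_{tV}f_W)$, using that the infinitesimal generator of $F^*_{tV}$ composed with the extra $W$-dependence of $f_{V+sW}$ assembles (after the $\kappa_T \equiv 0$ type cancellation already exploited in Proposition 2.2) into $t F_{tV}^* f_W$. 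Pulling back by $\tau\times\nu$, the variation of the gnarl is $\dot\Psi = (\tau\times\nu)^*(t\,F^*_{tV}f_W)$ up to a pluriharmonic (hence $dd^c$-killed) term, so that $\tfrac{\partial}{\partial s}\big|_{s=0}\,\omega_{V+sW} = dd^c\bigl((\tau\times\nu)^*(\tau\,F^*_{\tau V}f_W)\bigr)$. Using the remark at the end of Section 2, it suffices to evaluate everything on the slice $\{e^{-2}\}\times X$ (or to conjugate by $\tilde F_{tV}$), where $F^*_{\tau V}f_W$ becomes simply $f_W$ after transport, so the effective variation of the potential is $dd^c(\tau f_W)$ — this is exactly where the factor $\tau f_W$ in the statement comes from.

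For the second stage I would invoke the classical formula: if $\omega_s = \omega_V + s\,dd^c\phi + O(s^2)$, then $\tfrac{d}{ds}\big|_{s=0}\mathrm{Scal}(\omega_s) = -\Delta_V^2\phi - \langle \mathrm{Ric}(\omega_V), dd^c\phi\rangle_V + \langle d\,\mathrm{Scal}(\omega_V), d\phi\rangle_V$, and the first two terms combine, via the Bochner–Kodaira identity, into $2\mathcal{D}_V^{\star_V}\mathcal{D}_V\phi$ where $\mathcal{D}_V f = \bar\partial\,\mathrm{grad}^{1,0}_V f$. Substituting $\phi = \tau f_W$ yields precisely $W \mapsto 2\mathcal{D}_V^{\star_V}\mathcal{D}_V(\tau f_W) + 2\langle d\,\mathrm{Scal}(\omega_V), d(\tau f_W)\rangle_V$. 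The one subtlety is the factor of $2$ on the transport term, which I would track carefully through the $dd^c = 2i\bar\partial\partial$ normalization fixed in Section 2; I expect the convention to produce it automatically.

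The main obstacle is the first stage: justifying rigorously that the extra terms arising from differentiating $F^*_{tV}$ (the $t$-dependent flow through the $W$-direction) do not contribute to $dd^c\dot\Psi$ beyond the stated $t F^*_{tV}f_W$. This is the exact analogue of the vanishing $\kappa_T \equiv 0$ computation in the proof of Proposition 2.2, and I would handle it the same way: show the candidate discrepancy is a function of $t$ alone (independent of $x\in X$), check it vanishes at $t=0$, and check its $t$-derivative vanishes identically using that the flow commutes with $\pounds_V$ and that $f_W$ is itself a holomorphy potential. A secondary (purely bookkeeping) point is confirming that pulling back through $\tau\times\nu$ and restricting to a $\tau$-slice is compatible with the scalar-curvature linearization formula — but this is exactly the content of the Section 2 remark, which we are permitted to assume.
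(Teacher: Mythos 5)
Your proposal follows essentially the same route as the paper: linearize the gnarl $\Psi_V$ in the $W$-direction to obtain (at the level of $dd^c$) the effective distortion potential $\tau f_W$, then feed that into the LeBrun--Simanca formula for the Fr\'echet derivative of the scalar curvature map. The paper differentiates $d_Xd^c_X\Psi_{V+uW} = F^*_{\tau(V+uW)}\sigma - \sigma$ directly, rewrites the one-parameter variation of the flow via $F_{\tau(V+uW)} = F_{\tau V}\circ F_{\tau u F_{-\tau V*}W}$ to first order in $u$, and then applies Cartan's formula to land on $d_Xd^c_X(\tau f_W)$; you instead differentiate the defining ODE for $\psi_{tV}$, which is the same computation organized differently. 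Both accounts drop an $F^*_{\tau V}$ pullback that, strictly speaking, survives when $[V,W]\neq 0$ --- you flag this correctly as ``the main obstacle'' --- and both implicitly rely on the fact that for the implicit-function-theorem argument one only needs the linearization at $V=0$, where the issue disappears. You also correctly anticipate that the remaining work is removing the complex Hessians via the $\kappa_T\equiv 0$ cancellation from Proposition~\ref{prop:pos}, which is precisely the paper's closing sentence of the proof. The factor-of-$2$ worry you raise is real but benign: it is fixed by the $dd^c = 2i\bar\partial\partial$ convention and the normalization LeBrun--Simanca use for $\mathcal{D}^\star\mathcal{D}$.
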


\begin{proof}
We begin with the fact, as shown in \cite{LBS}, that the Fréchet derivative --- defined by convergence on compact subsets --- of the map $\varphi \mapsto $ Scal$(\omega + dd^c\varphi)$ from distortion potentials to scalar curvature is given by $$\phi \mapsto 2\mathcal{D}^\star\mathcal{D}\phi + 2\langle d\text{Scal}(\omega),d\phi\rangle,$$ namely twice the Lichnerowicz operator for $\omega$ plus a correction involving the derivative of the scalar curvature which of course vanishes when based at a cscK metric. The new information here then arises from the linearization of $W \mapsto \Psi_W$ at a gnarled metric, which since the map $\tau\times \nu$ is independent of $W$, we can infer by differentiating the map $W \mapsto d_Xd_X^c\psi_{tW}$. 

As in \cite{Posc}, the time-$t$ flow of a sum of vector fields $V$ and $W$ can be ``factored" into the composition of the time-$t$ flow of $V$ given by $F_{tV}$ and the time-$t$ flow of the time-dependent vector field $F_{tV}^*W$, where $\left(F_{tV}^*W\right)(x) = (F_{tV}^{-1})_*\left(W(F_{tV}(x)\right)$ arises from the adjoint representation of diffeomorphisms acting on vector fields. Because of the double duty of $t$ here, let us briefly call this family of diffeomorphisms $F_{t,F^*_{tV}W}$. Then indeed $F_{tV}\circ F_{t,F^*_{tV}W}$ satisfies the defining equation of $F_{t(V+W)}$ since
\begin{align*}
    \frac{d}{dt}F_{tV}\circ F_{t,F^*_{tV}W}(x) &= V(F_{tV}\circ F_{t,F^*_{tV}W}(x))+F_{tV*}\left((F_{tV}^{-1})_*W(F_{tV}\circ F_{t,F_{tV}^*W}(x)\right) \\
&= V(F_{tV}\circ F_{t,F^*_{tV}W}(x))+W(F_{tV}\circ F_{t,F^*_{tV}W}(x)).
\end{align*}
Then we see for our derivative at $V$ in the direction $W$ that
 \begin{align*}
     \left.\frac{d}{du}\right|_{u=0} d_Xd^c_X \psi_{t(V + uW)} & = \left.\frac{d}{du}\right|_{u=0} F^*_{t(V + uW)}\sigma - \sigma
      =\left.\frac{d}{du}\right|_{u=0} F^*_{t, uF_{tV}^*W}F^*_{tV}\sigma.
 \end{align*}
The map $(t,u,p) \mapsto F_{t,uF^*_{tV}W}(p)$ satisfies 
\begin{equation*}
    \left.\frac{\partial^2}{\partial u\partial t}\right|_{u=0}F_{t,uF^*_{tV}W}(p) = \left.\frac{\partial}{\partial u}\right|_{u=0}uF^*_{tV}W(F_{t,uF^*_{tV}W}(p)) = F^*_{tV}W(p).
\end{equation*}
Thus in our expression pulling back $F^*_{tV}\sigma$, we can integrate along $t$ to find
\begin{equation*}
    \left.\frac{d}{du}\right|_{u=0} F^*_{t, uF_{tV}^*W}F^*_{tV}\sigma = \pounds_{\int_0^tF^*_{\varsigma V}Wd\varsigma}F^*_{tV}\sigma = d\left(\int_0^t F^*_{\varsigma V}W \iprod F^*_{tV}\sigma d\varsigma\right).
\end{equation*}

 Since a vector field is an element of $\h$ if it both vanishes somewhere and is holomorphic, it is apparent that $F^*_{\varsigma V}W \in \h$ for all $\varsigma$. Hence, we can explicitly determine its doubly-time-dependent holomorphy potential for the Kähler metric $F^*_{tV}\sigma$. Recall that since $W\iprod \sigma = d^cf_W$, for any other Kähler metric $\check{\sigma} = \sigma+dd^c\phi$ in the same class as $\sigma$, we have that $W\iprod \check{\sigma}=d^c\left(f_W+W\phi\right)$. Naturally, the holomorphy potential of $F^*_{\varsigma V}W$ with respect to $F^*_{\varsigma V}\sigma$ is $F^*_{\varsigma V}f_W$, so for the metric $F_{tV}^*\sigma$ we add to $F^*_{\varsigma V}f_W$ the derivative in the direction $F^*_{\varsigma V}W$ of the distortion potential $\psi_{tV}-\psi_{\varsigma V}$. This results in 
 $$F^*_{\varsigma V}W \iprod F^*_{tV}\sigma = d^cF^*_{\varsigma V}\left(f_W +W(F^*_{-\varsigma V}(\psi_{tV}-\psi_{\varsigma V}))\right).$$
 Since the operator $d^c$ can be moved past the integral, this derivative is clearly $dd^c$-exact. What is more, as the relationship in \ref{removeH} indicates that $F^*_{-\varsigma V}(\psi_{tV}-\psi_{\varsigma V}) = \psi_{(t-\varsigma)V}$, the integral in the end can be viewed as the average value of the function $F^*_{\varsigma V}\left(f_W+W\psi_{(t-\varsigma)V}\right)$ as $\varsigma$ varies from $0$ to $t$ multiplied afterwards by the weight $t$, which we call $\underline{f_W}_t$.

The complex Hessian operator $d_Xd^c_X$ can be removed just as in equation \ref{removeH}, and the time parameter $t$ can be pulled back to $\tau$, showing in particular that at the origin in $\h$ the derivative of the gnarl is $V \mapsto \tau f_V$. Finally, this can be combined with the formula for the derivative with respect to general distortion potentials to arrive at the desired result, proving the lemma.
\end{proof}

\begin{proof}[Proof of Theorem \ref{thm:T1}]
We will only perturb the metric $\omega$ by perturbing $\sigma$ and pulling back, so in order to parameterize the directions in  $H^{1,1}(X)$ we shall use the real $\sigma$-harmonic $(1,1)$-forms notated $\mathcal{H}_{\sigma}^{1,1}(X,\R)$. Moreover, because taking the scalar curvature will require four derivatives of the distortion potential, we will make of use the Hölder space $\mathcal{C}^{4,\alpha}(X)$ so that scalar curvatures land in $\mathcal{C}^{0,\alpha}(X)$ after restriction to the specific slice $\{e^{-2}\}\times X \cong X$.

Additionally, since we can define a gnarl for any holomorphic vector field in $\mathfrak{h}$ and Kähler metric on $X$, given small enough $\eta$ and $\varphi$, we will now set $\Psi_V^{\eta,\varphi}$ as the gnarl associated to $V$ by $\sigma + \eta + dd^c\varphi$. This permits the definition of a new gnarled metric:
$$\omega_{V,\eta,\varphi} = \omega + \nu^*\eta + dd^c(\nu^*\varphi + \Psi_V^{\eta,\varphi}).$$

The gnarling is so named because it simulates the effect of flowing in directions tangent to $X$ more and more as one approaches the zero section, wrapping around like a gnarl in a tree trunk, while still preserving the holomorphic structure. The gnarl is required here instead of a literal flow since although one can extend $V$ trivially to $N$, the vector field $\tau V$ on $\N$ along which one would flow is not holomorphic, so flowing the original symplectic form would yield something no longer Kähler for the original complex structure. However, the flow of $\tau V$ is still perfectly well-defined here and will allow us to somewhat ``ungnarl" the gnarl by applying the inverse flow to the gnarled metric.

Define $\F$ to be the time-one flow of the vector field $-\tau V$ on $\N$. Then for $J$ the almost complex structure on $\N$, we notice that 
$$G(V,\eta,\varphi) = -\F^*J\iprod \F^*(\omega_{V,\eta,\varphi})$$
defines a Riemannian metric on the smooth manifold $\N$ through the usual procedure of twisting the symplectic form by the almost complex structure to get the symmetric tensor. Such a metric certainly has a scalar curvature, and this is how we shall define our operator:
\begin{align*}
    \mathcal{S}:\h\oplus\mathcal{H}_{\sigma}^{1,1}(X,\R)\oplus\mathcal{C}^{4,\alpha}(X) \rightarrow \mathcal{C}^{0,\alpha}(X)/\R\\
    \text{ via }(V,\eta,\varphi) \mapsto \left[\text{Scal}(G(V,\eta,\varphi))|_{\{e^{-2}\}\times X}\right]
\end{align*}

The goal is to prove that there are $V_{\eta}$ and $\varphi_{\eta}$ such that for any small $\eta$, the metric $G(V_{\eta},\eta,\varphi_{\eta})$ has constant scalar curvature. This metric will indeed be a Kähler metric, but for the new complex manifold $(\N,\F^*J)$ and not our initial $(\N,J)$. The point is that once a constant scalar curvature metric, the Kähler manifold $(\N,\F^{-1*}G(V_{\eta},\varphi_{\eta},\eta),J,\omega_{V_{\eta},\eta,\varphi_{\eta}})$ will also be cscK since the constancy of the scalar curvature will not be altered under a diffeomorphism. Furthermore, even for arbitrarily small $V$, we have shown adding the complex Hessian of the gnarl is not a small perturbation of the original cscK metric, but in fact $G(V,\eta,\varphi)$ is arbitrarily close to the starting metric, so an invertibility argument has some hope of applying.

First we must establish smooth dependence on the variables so that $\mathcal{S}$ is a well-defined continuously differentiable operator near the origin, and then we will invert $\mathcal{S}$ using its linearization and the implicit function theorem. Moving outside in, restriction to a smooth submanifold is a smooth map, and taking the scalar curvature of a Riemannian metric is analytic in the metric's components. Since Hölder continuous functions composed with an analytic one stay Hölder, we now must only establish smoothness for the tensors $\F^*J$ and $\F^*\omega_{V,\eta,\varphi}$, that is to say that their local regularity about $\{e^{-2}\}\times X$ is no worse than that of the inputs.

The pullback of the complex structure is more straightforward to compute, though it has both covariant and contravariant properties. The holomorphic tangent bundle splits into
$$T_{\N} = \mathcal{O}_{\N}\oplus\nu^*T_X$$
where $ \mathcal{O}_{\N}$ is just the trivial line bundle coming from the tangent bundle of $\mathbb{D}^*$ pulled back up the product. Also note that since $\F(z,x) = (z,F_{-\tau V}(x))$, we can split $\F^*J$ up nicely along with $T_{\N}$. That is, for any $(1,0)$ vector field $Y$ on $\N$, 
$$\mathcal{F}_* Y= 			\begin{bmatrix}
				1&0\cdots0\\
				\partial_z F_{-\tau V}&\partial_X F_{-\tau V} 
			\end{bmatrix}Y, \text{ and }
\mathcal{F}_* \bar{Y}= 			\begin{bmatrix}
				1&0\cdots0\\
				\partial_{\bar{z}} F_{-\tau V}&\bar{\partial}_X F_{-\tau V} 
			\end{bmatrix}\bar{Y}.
$$
We note here that by the chain rule, the vector field $\partial_z = \frac{1}{z\ln(|z|)}\partial_{\tau}$, so that $$\partial_z F_{-\tau V} = -\frac{1}{z\ln(|z|)}V.$$ 
We can now use the $J|_X$ equivariance of $d_X F_{-\tau V}$ and the fact that
$$\mathcal{F}^*J = \mathcal{F}_*\circ J \circ \mathcal{F}^{-1}_*$$
to see that for any vector field $Z$ lifted from $X$,
\begin{align*}
   (\mathcal{F}^*J)Z = d_X F_{-\tau V}\circ J\circ d_X F_{\tau V}(Z) = d_X F_{-\tau V}\circ d_X F_{\tau V}(JZ)  = JZ. 
\end{align*}

For vertical vectors, we calculate on the frame for $\mathcal{O}_{\N}\oplus\overline{\mathcal{O}_{\N}}$ given by $\{\partial_z, \partial_{\bar{z}}\}$ to see
\begin{align*}
    (\mathcal{F}^*J)\partial_z &=
    \F_*\left(J\left(\partial_z+\frac{1}{z\ln(|z|)} V\right)\right)
    =\F_*\left(i\partial_z+\frac{1}{z\ln(|z|)}J V\right)
    \\
    &= i\partial_z-\frac{i}{z\ln(|z|)} V +\frac{1}{z\ln(|z|)}J V = i\partial_z -\frac{2i}{z\ln(|z|)} V^{0,1}.
\end{align*}

Here we write $V^{0,1}$ for the $(0,1)$ part of $V$, defined so that $V = V^{1,0}+V^{0,1}$. A similar calculation for $\partial_{\bar{z}}$ shows that as a tensor, we get the following smooth linear expression in terms of $V$:
\begin{equation*}
(\mathcal{F}^*J)=J+4\mathfrak{Re}\left(\frac{i}{\bar{z}\ln(|z|)} V^{1,0}\otimes d\bar{z}\right) = J+4\mathfrak{Re}(iV^{1,0}\otimes \bar{\partial} \tau)
\end{equation*}
$$=J-e^{-\tau}V\otimes d\theta+JV\otimes d\tau = J-V\otimes d^c\tau+JV\otimes d\tau.$$

For the symplectic form, we take note that around the slice we are interested in, $$dd^c\Psi_{V}^{\eta,\varphi} = dd^c(\Psi_{V}^{\eta}+F^*_{\tau V}\nu^*\varphi-\nu^*\varphi).$$
Recalling that $\tau$ is unchanged by $\F$, pulling back the form altogether leaves us with
\begin{equation*}
   \F^*\varpi_{V,\eta,\varphi} = dd^c\tau + \F^*\sigma +\F^*\eta + \mathcal{F}^*dd^c\Psi_{V}^{\eta} + \mathcal{F}^*\left(dd^cF^*_{\tau V}\nu^*\varphi\right). 
\end{equation*}

The map $(V,\eta) \mapsto \mathcal{F}^*(\sigma +\eta)$ is smooth --- clearly in $\eta$ --- with derivative in $V$ behaving as in Lemma \ref{lemma:L1} where the gnarl $\Psi_{V}^\eta$ was shown to be smooth in $V$. Notice that if we use an appropriately normalized Green's operator $\mathfrak{G}_{\sigma}$ for the Laplacian $\Delta_{\sigma}$ of $\sigma$, we can write the gnarl as an affine linear expression in $\eta$:
$$\Psi_{V}^{\eta} = \nu^*\left(\mathfrak{G}_{\sigma}\Lambda_{\sigma}(F^*_{\tau V}(\sigma+\eta)-\sigma-\eta)\right).$$

Lastly then, we can see that the operation applied to $\varphi$ can be simplified to $\varphi \mapsto d(\mathcal{F}^*J)d\nu^*\varphi$, the complex Hessian in terms of the pulled back complex structure. Yet we already showed this flowed complex structure is smooth in $V$ with smooth components, so composing with its linear endomorphism action on differential forms will be smooth in $V$ and $\varphi$. Hence, $\mathcal{F}^*\varpi_{V,\eta,\varphi}$ is a $\mathcal{C}^1$ expression in the variables, and the Riemannian metric $-\mathcal{F}^*J\iprod \mathcal{F}^*(\varpi_{V,\eta,\varphi})$ is likewise so regular, and we can finally conclude the operator $\mathcal{S}$ is also at least $\mathcal{C}^1$.

To finish the proof, we must show that the differential of $\mathcal{S}$ in the variables $V$ and $\varphi$ at $\eta = 0$ is invertible. This will be sufficient for the the local submersion theorem for Banach spaces as stated in Appendix \emph{A3} of Donaldson-Kronheimer \cite{DK} to define a cscK gnarled metric for each nonzero $\eta$ in a small neighborhood $U$ of the origin in $\mathcal{H}^{1,1}(X)$, as specified in the theorem statement. Since the complex Hessian operator is linear in itself, we can replace $\Psi_V$ by its $V$-linearization when differentiating $\mathcal{S}$, as the final pullback by $\F$ introduces only quadratic or higher-order expressions in $V$ which will vanish in the derivative. 

As in Lemma \ref{lemma:L1}, we set $\mathcal{D}^\star\mathcal{D}$ as the Lichnerowicz operator for $\omega$, and additionally we consider $\mathcal{D}_{\sigma}$ and $\star_\sigma$ as the analogous operators for $\sigma$ on $X$. Differentiating in these two variables then gives
\begin{equation*}
    \left.\frac{d}{du}\right|_{u=0}\mathcal{S}(uV,0,u\varphi) = \left[2\mathcal{D}^\star\mathcal{D}(\nu^*(\varphi+\tau f_V))|_{\{e^{-2}\}\times X}\right].
\end{equation*}

Here we refer to Equation 3.4 in the proof of Lemma 4.7 in Sektnan \cite{Sek},  where the problem of blowing up manifolds with PT metrics is considered, particularly with respect to how to handle holomorphic vector fields on $X$. When applied to a function $h$ which is independent of the angle $\theta$, the Lichnerowicz operator on the product of the disk and $X$ expands to
$$\mathcal{D}^\star\mathcal{D}h = (\partial_{\tau}^2-\partial_{\tau})^2h - (\partial_{\tau}^2-\partial_{\tau})\Delta_\sigma h - (\partial_{\tau}^2-\partial_{\tau})h + \mathcal{D}_{\sigma}^\star\mathcal{D}_{\sigma}h.$$
Recall also that $f_V \in \text{ker}\mathcal{D}_{\sigma}^\star\mathcal{D}_{\sigma}$ since it is a holomorphy potential for $V$. Certainly both $\varphi$ and $\tau f_V$ are independent of $\theta$, so 

\begin{equation*}
    \left.\frac{d}{du}\right|_{u=0}\mathcal{S}(uV,0,u\varphi) = \left[2\mathcal{D}^{\star}_{\sigma}\mathcal{D}_{\sigma}\varphi + 2f_V + 2\Delta_{\sigma}f_V\right].
\end{equation*}

Additionally, we can show that this map surjects to $\mathcal{C}^{0,\alpha}(X)/\R$ through an integration by parts due to the self-adjointness of the Lichnerowicz operator on $X$. We note that for any holomorphy potential $f$ on $X$, the expression $f+\Delta_{\sigma}f$ is orthogonal to im$\mathcal{D}_{\sigma}^\star\mathcal{D}_{\sigma}$, forming a complementary basis under the $L^2$-product. Suppose in fact that $f+\Delta_{\sigma}f = \mathcal{D}^{\star}_{\sigma}\mathcal{D}_{\sigma}\xi$ for some $\xi$, then
$$\int_X f^2+\nrm{df}^2\sigma^n = \int_X (f+\Delta_{\sigma}f)f\sigma^n = \int_X f\mathcal{D}^{\star}_{\sigma}\mathcal{D}_{\sigma}\xi\sigma^n = \int_X \xi\mathcal{D}^{\star}_{\sigma}\mathcal{D}_{\sigma}f\sigma^n = 0$$

Thus every function in $\mathcal{C}^{0,\alpha}(X)$ can be written as $\mathcal{D}^{\star}_{\sigma}\mathcal{D}_{\sigma}\varphi + f_V + \Delta_{\sigma}f_V + \text{ const.}$ for some $\varphi$ and $V$, just as we need. This surjectivity allows invocation of the implicit function theorem so that there exists a $U \subset \mathcal{H}_{\sigma}^{1,1}(X,\R)$ containing $0$ such that given $\eta \in U$, we can find $V_{\eta} \in \h$ and $\varphi_{\eta} \in \mathcal{C}^{4,\alpha}(X)$ defining a metric $\varpi_{V_\eta} = \omega + \eta + dd^c\Psi_{V_{\eta}}^{\eta,\varphi_{\eta}}$ which is both cscK and complete on $\N$. 

Since we now know $\Delta_{\varpi_{V_\eta}} (\ln \varpi_{V_\eta}^{n+1})$ is a constant and therefore smooth, by elliptic regularity for the two operators $f \mapsto \Delta_{\varpi_{V_\eta}}f$ and $\varphi \mapsto \ln (\varpi_{V_\eta} + dd^c\varphi)^{n+1}$, one can bootstrap $\varphi_\eta$ to be $\mathcal{C}^{\infty}(X)$. Finally, set $z_0 \in \mathbb{D}^*$ and define $s(x) = (z_0,x)$ to be a fiberwise constant section. Then lastly we clearly have the pullback $$s^*(\varpi_{V_\eta}) = \sigma + \eta + d_Xd_X^c(\Psi_{V_{\eta}}^{\eta,\varphi_{\eta}}|_{\{z_0\}\times X}) \in [\sigma + \eta].$$
\end{proof}

With the existence of these metrics on a product model, a natural next step would be to define their extensions on the complement of a complex hypersurface $X$ in a compact manifold $M$ as is done for PT metrics, or in even greater generality on the complement of a normal crossing divisor in $M$. This introduces various technical complications mainly stemming from the lack of an obvious choice for the extension of a vector field on $X$ to a tubular neighborhood. When there is a holomorphic tubular neighborhood, one can consider a holomorphic lift, but more generally a horizontal lift that preserves the log-polar distance may be preferred. 

Nevertheless, in the case that $X$ has a holomorphically trivial normal bundle, the gnarl $\Psi_V$ can still be introduced as-is to existing PT metrics to construct new gnarled metrics on $M\setminus X$  which are asymptotically cscK without necessarily splitting as a product of cscK metrics near the divisor. The gnarl can even be defined when there is curvature in the normal bundle, using the log-polar distance induced by the metric, and when $V$ is small, the result will still be a Kähler form. The fundamental exploit in the proof of Theorem \ref{thm:T1} is the symmetry provided by the cusp metric on the punctured disk. Without this, one would need to set up the scalar curvature operator using Cheng-Yau weighted Hölder spaces, which requires a starkly different approach than herein followed.

\end{document}